\definecolor{oneblue}{rgb}{0,0.0,0.75}
\newcommand{\sech}{\mathop{\operator@font sech}}
\newcommand{\sign}{\mathop{\operator@font sign}}
\newtheorem{proposition}{Proposition}[section]
\numberwithin{equation}{section}
\begin{document}

\title[]{Numerical solution of internal-wave systems in the intermediate long wave and the Benjamin-Ono regimes}

\author[V. A. Dougalis]{Vassilios A. Dougalis}
\address{Mathematics Department, University of Athens, 15784
Zographou, Greece \and Institute of Applied \& Computational
Mathematics, FO.R.T.H., 71110 Heraklion, Greece}
\email{doug@math.uoa.gr}

\author[A. Duran]{Angel Duran}
\address{ Applied Mathematics Department,  University of
Valladolid, 47011 Valladolid, Spain}
\email{angel@mac.uva.es}

\author[L. Saridaki]{Leetha Saridaki}
\address{Mathematics Department, University of Athens, 15784
Zographou, Greece \and Institute of Applied \& Computational
Mathematics, FO.R.T.H., 71110 Heraklion, Greece}
\email{leetha.saridaki@gmail.com}

\subjclass[2010]{65M70 (primary), 76B15, 76B25 (secondary)}
\keywords{Internal waves, Intermediate Long Wave systems, Benjamin-Ono systems, solitary waves, spectral methods, error estimates}

\begin{abstract}
The paper is concerned with the numerical approximation of the Intermediate Long Wave and Benjamin-Ono systems, that serve as models for the propagation of interfacial internal waves in a two-layer fluid system in particular physical regimes. The paper focuses on two issues of approximation. First, the spectral Fourier-Galerkin method is used to discretize in space the corresponding periodic initial-value problems, and the error of the semidiscretizations is analyzed. The second issue concerns the numerical generation of solitary-wave solutions of the systems. We use acceleration techniques to improve the computation of the approximate solitary waves and check their performance with numerical examples.
\end{abstract}

\maketitle

\section{Introduction}
In this paper we consider the numerical approximation of two one-dimensional, nonlocal systems of nonlinear partial differential equations (pde's)  of dispersive wave type. The systems have been derived in \cite{BLS2008} as models describing the propagation of internal waves in a two-layer interface problem with rigid upper and lower boundaries and under two different regimes in the case of a shallow upper layer and small-amplitude deformations in the lower layer.
\begin{figure}[htbp]
\centering
\includegraphics[width=0.8\textwidth]{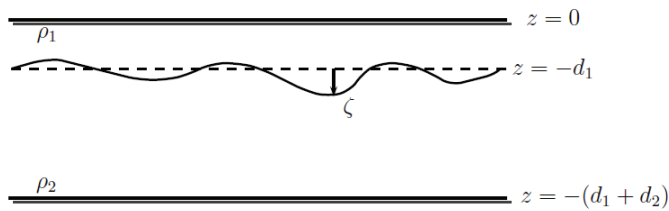}
\caption{Idealized model of internal wave propagation in a two-layer interface problem: $\rho_{2}>\rho_{1}; d_{2}>d_{1}$; $\zeta(x,t)$ denotes the downward vertical displacement of the interface from its level of rest at $(x,t)$.}
\label{ILWBO_fig1}
\end{figure}
The idealized model is sketched in Figure \ref{ILWBO_fig1}. It consists of two layers of inviscid, homogeneous, incompressible fluids with depths $d_{1}, d_{2}$ and densities $\rho_{1}<\rho_{2}$. The upper and lower layers are bounded above and below, respectively, by rigid lids, with the origin of the vertical coordinate at the top.

In \cite{BLS2008} the Euler equations with interface are reformulated in terms of two nonlocal operators linking the velocity potentials associated with the two layers and evaluated at the interface. This approach is then used to derive different asymptotic models, with which the Euler system is consistent. The models are valid in specific physical regimes described in terms of the parameters
\begin{eqnarray*}
\epsilon=\frac{a}{d_{1}},\; \mu=\frac{d_{1}}{\lambda^{2}},\; \epsilon_{2}=\frac{a}{d_{2}},\; \mu=\frac{d_{2}}{\lambda^{2}},
\end{eqnarray*}
where $a$ and $\lambda$ denote, respectively, a typical amplitude and wavelength of the interfacial deviation.

In the present paper we focus on two of these regimes. As previously mentioned, the upper layer is assumed to be shallow ($\mu<<1$) while for the lower layer the deformations are assumed to be of small amplitude ($\epsilon_{2}<<1$). Under these conditions, two situations are considered:
\begin{itemize}
\item[(i)] The Intermediate Long Wave (ILW) regime: In this case, small amplitude deformations are additionally assumed for the upper layer; specifically, it is supposed that
$$\mu\sim\epsilon^{2}\sim\epsilon_{2}<<1, \mu_{2}\sim 1.$$
In 1D, the corresponding system in nondimensional, unscaled form are given by the equations
\begin{equation}\label{ILW}
\begin{array}{l}
\left[1+\frac{\alpha}{\gamma}|D|{\rm coth}|D| \right]\zeta_t+\frac{1}{\gamma}\left((1- \zeta)u \right)_x+\frac{(\alpha-1)}{\gamma^2}(|D|{\rm coth}|D|)u_x=0\ ,\\
u_t+(1-\gamma)\zeta_x-\frac{1}{2\gamma}(u^2)_x=0\ ,
\end{array}
\end{equation}
where $\zeta=\zeta(x,t)$ denotes the interfacial deviation, $\gamma=\frac{\rho_{1}}{\rho_{2}}<1$,  $\alpha\geq 1$ is a modelling parameter, and the nonlocal operator $|D|$ has the Fourier symbol 
$$\widehat{|D|f}(k)=|k|\widehat{f}(k),\; k\in\mathbb{R},$$ with $\widehat{f}(k)$ standing for the Fourier transfom of $f$ at $k$. In (\ref{ILW}) $x$ and $t$ are proportional to distance along the fluid channel and time respectively, and $u=u(x,t)$ is a velocity variable.
\item[(ii)] The Benjamin-Ono (B-O) regime: This corresponds to the range of parameters
$$\mu\sim\epsilon^{2}\sim\epsilon_{2}<<1, \mu_{2}=\infty,$$ and the resulting 1D version of the corresponding systems in nondimensional, unscaled form is
\begin{equation}\label{BO}
\begin{array}{l}
\left[1+\frac{\alpha}{\gamma}|D| \right]\zeta_t+\frac{1}{\gamma}\left((1- \zeta)u \right)_x+\frac{(\alpha-1)}{\gamma^2}|D|u_x=0\ ,\\
u_t+(1-\gamma)\zeta_x-\frac{1}{\gamma}uu_{x}=0\ ,
\end{array}
\end{equation}
\end{itemize}
The linear well-posedness and consistency of the Euler system with (\ref{ILW}) and (\ref{BO}) are considered in  \cite{BLS2008}. The Cauchy problem for (\ref{ILW}) and (\ref{BO}) has been studied by Xu, \cite{X}. In the case of (\ref{ILW}), Xu showed, among other, local and long-time existence of solutions for $\alpha>1$. In addition, and for $\alpha>1$ as well, it is noted in \cite{X}, Remark 4.2, that these properties also hold for (\ref{BO}). Note that similar systems to (\ref{ILW}), (\ref{BO}) have been considered in \cite{CGK}.

Existence of smooth solitary-wave solutions of (\ref{ILW}) and (\ref{BO}) was recently proved in \cite{AnguloS2019}, with arguments based on the implicit function theorem. Furthermore, the solitary waves of the ILW systems were proved to decay exponentially, while those of the B-O systems to decay like $1/x^{2}$. The numerical generation of solitary waves of (\ref{ILW}) and (\ref{BO}) was studied in \cite{BonaDM2021}, where three iterative techniques, two of them based on the Petviashvili method, \cite{Petv1976,pelinovskys}, and the third given by the Conjugate-Gradient-Newton (CGN) method, \cite{Yang}, were introduced and their performance was compared. Approximations of some solitary-wave solutions for (\ref{ILW}) and (\ref{BO}), 
whose existence was not covered by the results in \cite{AnguloS2019}, were computed, and the resulting profiles were compared with solitary waves of the corresponding unidirectional ILW and BO equations.

In this paper we discretize in space the 
periodic initial-value problem (ivp) for the systems (\ref{ILW}) and (\ref{BO}), using the spectral Fourier-Galerkin method, and prove, in section \ref{sec2}, error estimates for the ensuing semidiscretizations. While there exist error analyses of spectral discretizations of the one-way ILW and B-O equations, cf. e.~g. \cite{PD}, we are not aware of any such analysis in the case of the systems.
Section \ref{sec3} is devoted to the numerical generation of solitary waves of the systems. Here we modify the Petviashvili iteration, implemented in \cite{BonaDM2021}, by introducing a vector extrapolation method, \cite{sidi}, with the aim of accelerating the convergence. Numerical examples illustrate the performance of the resulting procedure.

The following notation will be used. On the interval $(0,1)$,
the inner product and norm on $L^{2}=L^{2}(0,1)$ are denoted by $(\cdot,\cdot)$ and $||\cdot ||$, respectively. For real $\mu\geq 0$, $H^{\mu}$ will stand for the $L^{2}$-based periodic Sobolev spaces on $[0,1]$ with the norm given by
$$||g||_{\mu}=\left(\sum_{k\in\mathbb{Z}}(1+k^{2})^{\mu}|\widehat{g}(k)|^{2}\right)^{1/2},\; g\in H^{\mu},
$$ where $\widehat{g}(k)$ denotes the $k$th Fourier coefficient of $g$. For $1\leq p\leq\infty$ $W^{\mu,p}=W^{\mu,p}(0,1)$ stands for the Sobolev space of periodic functions on $(0,1)$ of order $\mu$, whose generalized derivatives are in $L^{p}=L^{p}(0,1)$. The norm on $L^{\infty}$
will be denoted by $|\cdot |_{\infty}$, and that on $W^{\mu,\infty}$ by $||\cdot ||_{\mu,\infty}$.
For an integer $N\geq 1$, $(\cdot,\cdot)_{N}$ will denote the Euclidean inner product in $\mathbb{C}^{2N}$, and the associated norm will be denoted by $||\cdot||_{N}$.
\section{Error estimates of the spectral semidiscretizations}
\label{sec2}
In order to describe and analyze the spectral semidiscretizations of (\ref{ILW}) and (\ref{BO}) some preliminaries are needed.
In the sequel we will assume that $\alpha>1$, so that the theory in \cite{X} is valid. We write the nonlocal operator in (\ref{ILW}) as
\begin{eqnarray}
g(D):=\frac{\alpha}{\gamma}|D|{\rm coth}|D|,\label{e21}
\end{eqnarray}
with symbol $g(k)=\frac{\alpha}{\gamma}|k|{\rm coth}|k|$. We observe that for real $k$ $g(k)$ behaves like $\frac{\alpha}{\gamma}|k|$ for $|k|>>1$, and like $\frac{\alpha}{\gamma}\left(1+\frac{k^{2}}{3}+O(k^{4})\right)$ for small $|k|$. Consequently, the symbol of $(1+g(D))^{-1}$, i.~e. $\left(1+\frac{\alpha}{\gamma}|k|{\rm coth}|k|\right)^{-1}$, is bounded for all $k\in\mathbb{R}$, and is of $O\left(\frac{1}{|k|}\right)$ as $|k|\rightarrow\infty$.

On the other hand, since in the B-O case the nonlocal term is 
\begin{eqnarray}
g(D)=\frac{\alpha}{\gamma}|D|,\label{e21b}
\end{eqnarray}
whose symbol is simply $\frac{\alpha}{\gamma}|k|$, we see again that the symbol of $(1+g(D))^{-1}$, i.~e. $\left(1+\frac{\alpha}{\gamma}|k|\right)^{-1}$, is bounded for all $k\in\mathbb{R}$ and is of $O\left(\frac{1}{|k|}\right)$ as $|k|\rightarrow\infty$.

We consider the periodic ivp for (\ref{ILW}) and (\ref{BO}) on the spatial interval $[0,1]$ and written for $0\leq x\leq 1,\; 0\leq t\leq T$ in the form
\begin{eqnarray}
&&\zeta_{t}+\frac{1}{\gamma}(1+g(D))^{-1}\left(1+\frac{(\alpha-1)}{\alpha}g(D)\right)u_{x}=\frac{1}{\gamma}(1+g(D))^{-1}(\zeta u)_{x},\nonumber\\
&&u_{t}+(1-\gamma)\zeta_{x}=\frac{1}{2\gamma}\partial_{x}(u^{2}),\label{e22}\\
&&\zeta(x,0)=\zeta_{0}(x),\; u(x,0)=u_{0}(x),\nonumber
\end{eqnarray}
where $\zeta_{0}, u_{0}$ are given $1$-periodic smooth functions on $[0,1]$. We note again that for the ILW system (\ref{ILW}) $g(D)$ is given by (\ref{e21}), while, for the B-O system, by (\ref{e21b}).
We assume that the ivp (\ref{e22}) has a unique solution which is sufficiently smooth for the purposes of the error estimation.

We introduce the nonlocal operators $$\mathcal{T}:=(1+g(D))^{-1},\quad \mathcal{J}:=(1+g(D))^{-1}\left(1+\frac{(\alpha-1)}{\alpha}g(D)\right).$$ We have previously examined the symbol of $\mathcal{T}$. The symbol of $\mathcal{J}$ is $\frac{1+\frac{(\alpha-1)}{\alpha}g(k)}{1+g(k)}$, which, in view of our assumptions on $\alpha$ and $\gamma$, is well defined and bounded for $k\in\mathbb{R}$.

Let $N\geq 1$ be an integer, and consider the finite dimensional space
\begin{eqnarray*}
S_{N}={\rm span} \{e^{ikx},\; k\in\mathbb{Z}, -N\leq k\leq N\}.
\end{eqnarray*}
We recall several properties of 
the $L^{2}$-projection operator onto $S_{N}$
\begin{eqnarray*}
P_{N}v=\sum_{|k|\leq N}\widehat{v}(k)e^{ikx},
\end{eqnarray*}
where $\widehat{v}(k)$ is the $k$th Fourier coefficient of $v$. 
\begin{itemize}
\item $P_{N}$ commutes with $\partial_{x}$. 
\item Given integers $0\leq j\leq \mu$, and for any $v\in H^{\mu}, \mu\geq 1$,
\begin{eqnarray}
||v-P_{N}v||_{j}&\leq &CN^{j-\mu}||v||_{\mu},\label{epn1}\\
|v-P_{N}v|_{\infty}&\leq &CN^{1/2-\mu}||v||_{\mu},\label{epn2}
\end{eqnarray}
for some constant $C$ independent of $N$. 
\end{itemize}
We will also use the following inverse inequalities.
Given $0\leq j\leq \mu$, there exists a constant $C$ independent of $N$, such that for any $\psi\in S_{N}$
\begin{eqnarray}
||\psi||_{\mu}\leq CN^{\mu-j}||\psi||_{j},\; 
||\psi||_{\mu,\infty}\leq CN^{1/2+\mu-j}||\psi||_{j}.\label{epn3}
\end{eqnarray}

In what follows, $C$ will denote a constant independent of $N$.

Now we may define the semidiscretizations.
The semidiscrete spectral Fourier-Galerkin approximation of (\ref{e22}) is defined by the real-valued functions $\zeta_{N},u_{N}:[0,T]\rightarrow S_{N}$ satisfying for $0\leq t\leq T$
\begin{eqnarray}
&&\zeta_{N,t}+\frac{1}{\gamma}(1+g(D))^{-1}\left(1+\frac{(\alpha-1)}{\alpha}g(D)\right)u_{N,x}=\frac{1}{\gamma}(1+g(D))^{-1}\partial_{x}P_{N}(\zeta_{N} u_{N}),\nonumber\\
&&u_{N,t}+(1-\gamma)\zeta_{N,x}=\frac{1}{2\gamma}\partial_{x}P_{N}(u_{N}^{2}), \label{e23}\\
&&\zeta_{N}\big|_{t=0}=P_{N}\zeta_{0},\; u_{N}\big|_{t=0}=P_{N}u_{0}.\nonumber
\end{eqnarray}
The ode ivp (\ref{e23}) is implemented in its Fourier component form 
\begin{eqnarray*}
&&\widehat{\zeta}_{N,t}+\frac{1}{\gamma}(1+g(k))^{-1}\left(1+\frac{(\alpha-1)}{\alpha}g(k)\right)(ik)\widehat{u}_{N}=\frac{1}{\gamma}(1+g(k))^{-1})ik)\widehat{\zeta_{N} u_{N}},\\
&&\widehat{u}_{N,t}+(1-\gamma)(ik)\widehat{\zeta}_{N}=\frac{1}{2\gamma}(ik)\widehat{u_{N}^{2}},\\
&&\widehat{\zeta}_{N}(k,0)=\widehat{\zeta}_{0}(k),\; \widehat{u}_{N}(k,0)=\widehat{u}_{0}(k),
\end{eqnarray*}
where $\widehat{\zeta}_{N}=\widehat{\zeta}_{N}(k,t), \widehat{u}_{N}=\widehat{u}_{N}(k,t), -N\leq k\leq N, t\geq 0$ denote the Fourier coefficients of $\zeta_{N}$ and $u_{N}$ respectively.

The ode ivp (\ref{e23}) clearly has a local in time solution; part of the proof of the following proposition is showing that this solution can be extended up to $t=T$.
\begin{proposition}
\label{pro21}
(ILW systems.) Assume that the solution $\zeta, u$ of (\ref{e22}) for $\alpha>1$, and $g$ given by (\ref{e21}),  is such that $\zeta, u\in H^{\mu}, \mu>3/2$ for $0\leq t\leq T$. Then, for $N$ sufficiently large,
\begin{eqnarray}
\max_{0\leq t\leq T}\left(||\zeta_{N}-\zeta||+||u_{N}-u||\right)\leq CN^{1-\mu}.\label{e24}
\end{eqnarray}
\end{proposition}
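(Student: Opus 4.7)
The approach is the standard weighted $L^2$ energy method for Fourier--Galerkin semidiscretizations. Set $\theta := \zeta_N - P_N\zeta$, $\sigma := u_N - P_N u$, which lie in $S_N$ with $\theta(0)=\sigma(0)=0$, and let $\rho := \zeta - P_N\zeta$, $\tau := u - P_N u$, for which (\ref{epn1})--(\ref{epn2}) give the projection bounds. Applying $P_N$ to (\ref{e22}) and subtracting from (\ref{e23})---using that $P_N$, $\mathcal{T}$, $\mathcal{J}$, and $\partial_x$ all commute---yields the error system
\begin{equation*}
\theta_t + \tfrac{1}{\gamma}\mathcal{J}\sigma_x \;=\; \tfrac{1}{\gamma}\mathcal{T}\partial_x P_N(\zeta_N u_N - \zeta u), \qquad \sigma_t + (1-\gamma)\theta_x \;=\; \tfrac{1}{2\gamma}\partial_x P_N(u_N^2 - u^2).
\end{equation*}
The natural energy is $E(t) := \tfrac{1-\gamma}{2}\|\theta\|^2 + \tfrac{1}{2\gamma}(\mathcal{J}\sigma,\sigma)$, equivalent to $\|\theta\|^2+\|\sigma\|^2$ because for $\alpha>1$ the symbol of $\mathcal{J}$ is real, bounded, and uniformly positive. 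Testing the first equation against $(1-\gamma)\theta$ and the second against $\tfrac{1}{\gamma}\mathcal{J}\sigma$, and using self-adjointness of $\mathcal{J}$ together with periodic integration by parts, the linear cross terms cancel, leaving only two nonlinear contributions on the right-hand side.

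The first nonlinear term $\tfrac{1-\gamma}{\gamma}(\mathcal{T}\partial_x P_N W,\theta)$ with $W = \zeta_N u_N - \zeta u$ is benign: the text already observes that $\mathcal{T}(k)=O(1/|k|)$, so $\mathcal{T}\partial_x$ is a \emph{bounded} Fourier multiplier on $L^2$. Shifting the derivative onto $\theta\in S_N$ via adjointness (and moving $P_N$ across, since $\mathcal{T}\partial_x\theta\in S_N$) gives $|(\mathcal{T}\partial_x P_N W,\theta)|\leq C\|W\|\|\theta\|$; expanding $W = (\theta-\rho)u + \zeta_N(\sigma-\tau)$ and using a provisional bootstrap bound $|\zeta_N|_\infty\leq M$ yields $\|W\|\leq C(\|\theta\|+\|\sigma\|) + CN^{-\mu}$.

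The main obstacle is the second nonlinear contribution $\tfrac{1}{2\gamma^2}(\mathcal{J}\partial_x P_N V,\sigma)$ with $V = u_N^2 - u^2$, since $\mathcal{J}(k)\to(\alpha-1)/\alpha\neq 0$ as $|k|\to\infty$, making $\mathcal{J}\partial_x$ \emph{unbounded} on $L^2$. The algebraic identity that unlocks the estimate, obtained from the symbols (equivalently, from $g(D)\mathcal{T} = I - \mathcal{T}$), is
\begin{equation*}
\mathcal{J} \;=\; \tfrac{\alpha-1}{\alpha}\,I \;+\; \tfrac{1}{\alpha}\,\mathcal{T}.
\end{equation*}
The $\tfrac{1}{\alpha}\mathcal{T}$ part is handled as above via boundedness of $\mathcal{T}\partial_x$. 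For the identity piece, after moving $\mathcal{J}$ onto $\sigma$ and integrating by parts, one is reduced to bounding $(V,\sigma_x)$; substituting $V = (\sigma-\tau)(2u + \sigma - \tau)$ and integrating by parts term by term, $(\sigma^2,\sigma_x)$ vanishes by periodicity, the ``dangerous'' contributions $(u\sigma,\sigma_x)$ and $(\sigma\tau,\sigma_x)$ reduce via $\int u\sigma\sigma_x = -\tfrac{1}{2}\int u_x\sigma^2$ to $-\tfrac{1}{2}(u_x,\sigma^2)$ and $-\tfrac{1}{2}(\tau_x,\sigma^2)$ respectively, each bounded by $C\|\sigma\|^2$ once $|u_x|_\infty$ and $|\tau_x|_\infty$ are controlled (the latter via (\ref{epn2}), which is \emph{exactly} where the hypothesis $\mu>3/2$ enters, giving $|\tau_x|_\infty\leq CN^{3/2-\mu}\leq C$), and the remaining cross terms contribute at most $CN^{1-\mu}\|\sigma\|$, absorbed as $C\|\sigma\|^2 + CN^{2(1-\mu)}$ by Young.

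Assembling these bounds gives $\tfrac{dE}{dt}\leq CE + CN^{2(1-\mu)}$, whence, since $E(0)=0$, Gronwall yields $E(t)\leq CN^{2(1-\mu)}$ and $\|\theta(t)\|+\|\sigma(t)\|\leq CN^{1-\mu}$ on the interval of existence and bootstrap validity. The inverse inequality (\ref{epn3}) then gives $|\theta|_\infty+|\sigma|_\infty\leq CN^{3/2-\mu}\to 0$, which for $N$ large enough closes the bootstrap on $|\zeta_N|_\infty$ and $|u_N|_\infty$ with strict inequality and, by a standard continuation argument, precludes finite-time blow-up of the semidiscrete ODE solution, so that it exists on the whole of $[0,T]$. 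Combining the bound on $\|\theta\|+\|\sigma\|$ with the projection error $\|\rho\|+\|\tau\|\leq CN^{-\mu}$ yields (\ref{e24}).
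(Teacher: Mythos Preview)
Your argument is correct and uses the same essential ingredients as the paper's proof: the energy method, the key decomposition $\mathcal{J}=\tfrac{\alpha-1}{\alpha}I+\tfrac{1}{\alpha}\mathcal{T}$, the $L^2$-boundedness of $\mathcal{T}\partial_x$, the integration-by-parts trick for the dangerous quadratic-in-$\sigma$ terms (which is precisely where $\mu>3/2$ is needed), and the bootstrap/continuation argument.

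The one organizational difference is in the choice of energy and where the decomposition of $\mathcal{J}$ is invoked. You work with the operator-weighted energy $E=\tfrac{1-\gamma}{2}\|\theta\|^2+\tfrac{1}{2\gamma}(\mathcal{J}\sigma,\sigma)$, which kills the linear cross terms exactly but forces $\mathcal{J}$ onto the second nonlinear term, where you then split it. The paper instead applies the decomposition $\mathcal{J}=\tfrac{\alpha-1}{\alpha}I+\tfrac{1}{\alpha}\mathcal{T}$ directly at the level of the error equations, and afterwards uses the plain constant-weighted energy $\|\theta\|^2+\tfrac{\alpha-1}{\alpha\gamma(1-\gamma)}\|\xi\|^2$; this cancels only the identity part of the linear coupling and leaves a harmless residual linear term $-\tfrac{1}{\alpha\gamma}(\mathcal{T}\xi_x,\theta)$ on the right, but has the advantage that the second nonlinear term appears simply as $(B_x,\xi)$ with no operator attached. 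The two routes lead to the same estimates and are equivalent in difficulty; your version has the aesthetic appeal of a cleaner linear cancellation, while the paper's keeps the energy purely $L^2$ with scalar weights.
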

\begin{proof}
The general plan of the proof resembles that of \cite{DDS1}, where another class of asymptotic models for internal waves was analyzed.
We let $\theta=\zeta_{N}-P_{N}\zeta,\rho=P_{N}\zeta-\zeta$, so that $\zeta_{N}-\zeta=\theta+\rho$, and $\xi=u_{N}-P_{N}u, \sigma=P_{N}u-u$, so that $u_{N}-u=\xi+\sigma$. Applying $P_{N}$ on both sides of the pde's in (\ref{e22}) and subtracting from the respective semidiscrete equations in (\ref{e23}) we obtain
\begin{eqnarray}
&&\theta_{t}+\frac{1}{\gamma}\mathcal{J}\xi_{x}=\frac{1}{\gamma}\mathcal{T}\partial_{x}P_{N}A,\nonumber\\
&&\xi_{t}+(1-\gamma)\theta_{x}=\frac{1}{2\gamma}\partial_{x}P_{N}B,\qquad t\geq 0\label{e25}\\
&&\theta\big|_{t=0}=0,\; \xi\big|_{t=0}=0,\nonumber
\end{eqnarray}
where it is straightforward to see that
\begin{eqnarray*}
A&=&u\rho+\zeta \sigma+u\theta+\zeta\xi+\sigma\theta+\rho\xi+\rho\sigma+\theta\xi,\\
B&=&u\sigma+u\xi+\sigma\xi+\frac{1}{2}\sigma^{2}+\frac{1}{2}\xi^{2}.
\end{eqnarray*}
Using a standard trick for rational functions, cf. e.~g. \cite{X}, we may write
$$\mathcal{J}=(1+g(D))^{-1}\left(1+\frac{(\alpha-1)}{\alpha}g(D)\right)=\frac{\alpha-1}{\alpha}+\frac{1}{\alpha}(1+g(D))^{-1}=\frac{\alpha-1}{\alpha}+\frac{1}{\alpha}\mathcal{T},$$ and, therefore, may simplify the equations somewhat having just one nonlocal operator in the problem. We rewrite accordingly (\ref{e25}) as
\begin{eqnarray}
&&\theta_{t}+\frac{1}{\gamma}\frac{\alpha-1}{\alpha}\xi_{x}+\frac{1}{\alpha\gamma}\mathcal{T}\xi_{x}=\frac{1}{\gamma}\mathcal{T}\partial_{x}P_{N}A,\nonumber\\
&&\xi_{t}+(1-\gamma)\theta_{x}=\frac{1}{2\gamma}\partial_{x}P_{N}B,\qquad t\geq 0,\label{e26}\\
&&\theta\big|_{t=0}=0,\; \xi\big|_{t=0}=0.\nonumber
\end{eqnarray}
We will use the standard energy method to estimate $\theta$ and $\xi$. Taking the $L^{2}$ inner products of the semidiscrete equations in (\ref{e26}) with $\theta$ and $\xi$, respectively, we have
\begin{eqnarray}
&&(\theta_{t},\theta)+\frac{1}{\gamma}\frac{\alpha-1}{\alpha}(\xi_{x},\theta)+\frac{1}{\alpha\gamma}(\mathcal{T}\xi_{x},\theta)=\frac{1}{\gamma}(\mathcal{T}A_{x},\theta),\label{e27}\\
&&(\xi_{t},\xi)-(1-\gamma)(\theta,\xi_{x})=\frac{1}{2\gamma}(B_{x},\xi).\label{e28}
\end{eqnarray}
Hence, multiplying (\ref{e28}) by $\frac{\alpha-1}{\alpha\gamma (1-\gamma)}$ and adding to (\ref{e27}) we obtain
\begin{eqnarray}
\frac{1}{2}\frac{d}{dt}\left(||\theta||^{2}+\frac{\alpha-1}{\alpha\gamma (1-\gamma)}||\xi||^{2}\right)&=&-\frac{1}{\alpha\gamma}(\mathcal{T}\xi_{x},\theta)+\frac{1}{\gamma}(\mathcal{T}A_{x},\theta)\nonumber\\
&&+\frac{\alpha-1}{2\alpha\gamma^{2}(1-\gamma)}(B_{x},\xi).\label{e29}
\end{eqnarray}
Note that $\frac{\alpha-1}{2\alpha\gamma^{2}(1-\gamma)}>0$. Also note that the operator $\mathcal{T}\partial_{x}$, with symbol $\frac{ik}{1+g(k)}$, is bounded in $L^{2}$. Therefore we have by (\ref{e29}), as long as the solution of (\ref{e26}) (or (\ref{e23})) exists, that
\begin{eqnarray}
\frac{d}{dt}\left(||\theta||^{2}+||\xi||^{2}\right)\leq C\left(||\xi|| ||\theta||+||A|| ||\theta|| +|(B_{x},\xi)|\right).\label{e210}
\end{eqnarray}
We bound now the two last terms in the right-hand side of (\ref{e210}). By the definition of $A$ we have
\begin{eqnarray*}
||A||&\leq & |u|_{\infty}||\rho||+|\zeta|_{\infty}||\sigma||+|u|_{\infty}||\theta||+|\zeta|_{\infty}||\xi||+|\sigma|_{\infty}||\theta||\\
&&+|\rho|_{\infty}||\xi||+|\rho|_{\infty}||\sigma||+|\theta|_{\infty}||\xi||.
\end{eqnarray*}
Let $t_{N}\in (0,T]$ be the maximal temporal instance such that
\begin{eqnarray}
|\theta|_{\infty}\leq 1,\; 0\leq t\leq t_{N}.\label{e211}
\end{eqnarray}
Then, by the approximation properties of $S_{N}$ (\ref{epn1}), (\ref{epn2}), the inverse inequalities (\ref{epn3}), and the fact that $\mu\geq 1$, we get by the above estimate of $||A||$ that for $0\leq t\leq t_{N}$
\begin{eqnarray}
||A||\leq C\left(N^{-\mu}+||\theta||+||\xi||\right),\label{e212}
\end{eqnarray}
where $C$ is independent of $N$. To bound the term $(B_{x},\xi)$ note that by periodicity
\begin{eqnarray}
(B_{x},\xi)=((u\sigma)_{x},\xi)+((u\xi)_{x},\xi)+((\sigma\xi)_{x},\xi)+(\sigma\sigma_{x},\xi).\label{e213}
\end{eqnarray}
Since $\mu\geq 3/2$
\begin{eqnarray*}
|((u\sigma)_{x},\xi)|\leq |u_{x}|_{\infty}||\sigma|| ||\xi||+|u|_{\infty}||\sigma_{x}|| ||\xi||\leq CN^{1-\mu}||\xi||.
\end{eqnarray*}
Also, for the same reason
\begin{eqnarray*}
|((u\xi)_{x},\xi)|&= &\frac{1}{2}|(u_{x}\xi,\xi)|\leq \frac{1}{2}|u_{x}|_{\infty}||\xi||^{2}\leq C||\xi||^{2},\\
|((\sigma\xi)_{x},\xi)|&= &\frac{1}{2}|(\sigma_{x}\xi,\xi)|\leq \frac{1}{2}|\sigma_{x}|_{\infty}||\xi||^{2}\leq C||\xi||^{2},
\end{eqnarray*}
and
\begin{eqnarray*}
|(\sigma\sigma_{x},\xi)|\leq |\sigma_{x}|_{\infty}||\sigma|| ||\xi||\leq CN^{\frac{3}{2}-2\mu}||\xi||\leq CN^{-\mu}||\xi||.
\end{eqnarray*}
From these estimates and (\ref{e213}) we conclude that
\begin{eqnarray}
|(B_{x},\xi)|\leq C\left(N^{2(1-\mu)}+||\xi||^{2}\right),\label{e214}
\end{eqnarray}
as long as the solution of (\ref{e26}) exists. Therefore, (\ref{e210}), (\ref{e212}) and (\ref{e214}) give for $0\leq t\leq t_{N}$
\begin{eqnarray*}
\frac{d}{dt}(||\theta||^{2}+||\xi||^{2})\leq C\left(N^{2(1-\mu)}+||\theta||^{2}+||\xi||^{2}\right),
\end{eqnarray*}
from which, by Gronwall's inequality we get
\begin{eqnarray}
||\theta||+||\xi||\leq CN^{1-\mu},\label{e215}
\end{eqnarray}
for $0\leq t\leq t_{N}$, where $C$ is independent of $N$ and $t_{N}$. Since by (\ref{e215}) $|\theta|_{\infty}\leq CN^{3/2-\mu}$ and $\mu>3/2$, we infer that $t_{N}$ was not maximal in (\ref{e211}) for $N$ sufficiently large, and in the customary way the existence of solutions of (\ref{e25}) and the validity of (\ref{e215}) may be extended to $t=T$. The estimate (\ref{e24}) follows.
\end{proof}

As far as the B-O case is concerned, recall that the symbol of $(1+g(D))^{-1}$ is also bounded for all $k\in\mathbb{R}$ and is of $O\left(\frac{1}{|k|}\right)$ as $|k|\rightarrow\infty$. This was the basic property that we used in the error analysis of the spectral semidiscretization of (\ref{ILW}). Hence the proof of Proposition \ref{pro21} can be easily adapted to the B-O case for $\alpha>1$. Without proof we state:

\begin{proposition}
\label{pro31}
(B-O systems.) Assume that the solution $\zeta, u$ of the periodic ivp for (\ref{BO}) for $\alpha>1$ is such that $\zeta, u\in H^{\mu}, \mu>3/2$ for $0\leq t\leq T$. Let $(\zeta_{N},u_{N})$ be the solution of the Fourier-Galerkin semidiscretization of the periodic ivp, defined by (\ref{e23}), where now $g$ is given by (\ref{e21b}). Then $(\zeta_{N},u_{N})$ exists uniquely up to $t=T$ and satisfy
for $N$ sufficiently large
\begin{eqnarray*}
\max_{0\leq t\leq T}\left(||\zeta_{N}-\zeta||+||u_{N}-u||\right)\leq CN^{1-\mu},
\end{eqnarray*}
for some constant $C$ independent of $N$.
\end{proposition}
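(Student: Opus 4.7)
The plan is to follow, nearly verbatim, the energy-estimate strategy used for Proposition \ref{pro21}, substituting the new symbol $g(k)=\frac{\alpha}{\gamma}|k|$ throughout. The algebraic identity $\mathcal{J}=\frac{\alpha-1}{\alpha}+\frac{1}{\alpha}\mathcal{T}$ is formal in $g(D)$ and so carries over unchanged, as does the definition of the error variables $\theta=\zeta_N-P_N\zeta$, $\rho=P_N\zeta-\zeta$, $\xi=u_N-P_Nu$, $\sigma=P_Nu-u$; consequently the error system for $(\theta,\xi)$ takes the identical form of (\ref{e26}), with the same nonlinear quantities $A$ and $B$.

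Next, I would take the $L^2$ inner products of the two error equations with $\theta$ and $\xi$, respectively, multiply the second by the positive constant $\frac{\alpha-1}{\alpha\gamma(1-\gamma)}$, and add, arriving at the differential identity (\ref{e29}) verbatim. The one point at which the B-O case needs its own verification is the $L^2$-boundedness of $\mathcal{T}\partial_x$: here its Fourier symbol is $\frac{ik}{1+(\alpha/\gamma)|k|}$, uniformly bounded on $\mathbb{R}$ by $\gamma/\alpha$. Hence $\frac{1}{\alpha\gamma}|(\mathcal{T}\xi_x,\theta)|$ and $\frac{1}{\gamma}|(\mathcal{T}A_x,\theta)|$ are majorized by $C\|\xi\|\,\|\theta\|$ and $C\|A\|\,\|\theta\|$, respectively, exactly as in the ILW case, yielding the inequality (\ref{e210}) in the B-O setting.

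The bounds (\ref{e212}) on $\|A\|$ and (\ref{e214}) on $|(B_x,\xi)|$ then follow from the same use of the approximation properties (\ref{epn1})--(\ref{epn2}), the inverse inequalities (\ref{epn3}), the regularity hypothesis $\mu>3/2$, and the a priori bound $|\theta|_\infty\leq 1$ imposed on a maximal interval $[0,t_N]$; none of these ingredients involve the specific form of $g$. Gronwall's inequality yields $\|\theta\|+\|\xi\|\leq CN^{1-\mu}$ on $[0,t_N]$, and the inverse inequality $|\theta|_\infty\leq CN^{1/2}\|\theta\|\leq CN^{3/2-\mu}$ together with $\mu>3/2$ permits the standard bootstrap, extending both the existence of $(\zeta_N,u_N)$ and the error estimate to the full interval $[0,T]$.

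There is no genuine obstacle that did not already appear in the ILW case; the required analytic input --- boundedness and $O(1/|k|)$ decay at infinity of the symbol of $(1+g(D))^{-1}$ --- is if anything more transparent here, the symbol being the elementary expression $(1+(\alpha/\gamma)|k|)^{-1}$. The only item deserving a moment of care is to check that all constants absorbed into $C$ remain independent of $N$ when the new $g$ is used, which is routine since they depend only on $\alpha$, $\gamma$, $T$, Sobolev norms of $\zeta$ and $u$, and the uniform bound $\gamma/\alpha$ on the symbol of $\mathcal{T}\partial_x$.
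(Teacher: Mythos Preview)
Your proposal is correct and follows exactly the approach the paper indicates: the authors remark that the proof of Proposition~\ref{pro21} carries over to the B-O case because the symbol of $(1+g(D))^{-1}$ is again bounded and $O(1/|k|)$ at infinity, and they state Proposition~\ref{pro31} without proof. Your explicit verification that the symbol of $\mathcal{T}\partial_x$ is bounded by $\gamma/\alpha$ is precisely the one point that needs checking, and the remainder is, as you note, identical to the ILW argument.
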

\section{Solitary wave solutions}
\label{sec3}
The ILW and B-O systems (\ref{ILW}) and (\ref{BO}) have been shown to posses solitary-wave solutions. These are solutions $\zeta=\zeta(x-ct), u=u(x-ct), c\neq 0$, where $\zeta(X), u(X)\rightarrow 0$ as $|X|\rightarrow\infty$, and that satisfy the system
\begin{eqnarray}
&&-c(1+g(D))\zeta+\frac{1}{\gamma}\left(1+\frac{(\alpha-1)}{\alpha}g(D)\right)u=\frac{1}{\gamma}\zeta u,\nonumber\\
&&-cu+(1-\gamma)\zeta=\frac{1}{2\gamma}u^{2},\label{e31}
\end{eqnarray}
where $g(D)$ is given by (\ref{e21}) or (\ref{e21b}). The existence of smooth solutions of (\ref{e31}) was proved by Angulo-Pava and Saut, \cite{AnguloS2019}, for some range of speeds $c$, using the implicit function theorem. Properties of their asymptotic decay were also proved in the same paper, ensuring that in the ILW case the solitary waves decay exponentially, while in the B-O case, the decay is algebraic, like $1/|X|^{2}$. (Note that, in both cases, the asymptotic behaviour is the same as that of the solitary wave solutions of the corresponding unidirectional models.)

The numerical generation of approximate solutions of (\ref{e31}) was studied in \cite{BonaDM2021}. We summarize the numerical technique used. Let $l>0$ be large enough, $N\geq 1$ be an even integer, and discretize the periodic problem for (\ref{e31}) on $[-l,l]$ with a Fourier collocation method based on the $N$ collocation points $x_{j}=-l+jh, j=0,\ldots,N-1, h=2l/N$. The approximation to the solitary wave $(\zeta,u)$ is then represented by the nodal values $\zeta_{h}=(\zeta_{h,0},\ldots,\zeta_{h,N-1})^{T}$ and $u_{h}=(u_{h,0},\ldots,u_{h,N-1})^{T}$, where $\zeta_{h,j}\approx \zeta(x_{j}), u_{h,j}\approx u(x_{j}), j=0,\ldots, N-1$, and $\zeta_{h}, u_{h}$ satisfy the system
\begin{eqnarray}
S\begin{pmatrix} \zeta_{h}\\u_{h}\end{pmatrix}=F(\zeta_{h},u_{h}):=\frac{1}{\gamma}\begin{pmatrix} \zeta_{h}.u_{h}\\(u_{h}.^{2})/2\end{pmatrix},\label{e32}
\end{eqnarray}
where $S$ is the $2N$-by-$2N$ matrix
\begin{eqnarray}
S:=\begin{pmatrix} -c(I_{N}+g(D_{N}))&\frac{1}{\gamma}(I_{N}+\frac{\alpha-1}{\alpha}g(D_{N}))\\(1-\gamma)I_{N}& -cI_{N}\end{pmatrix},\label{e33}
\end{eqnarray}
being $I_{N}$ the $N$-by-$N$ identity matrix and $D_{N}$  the $N$-by-$N$ Fourier pseudospectral differentiation matrix. The dots on the right-hand side of (\ref{e32}) signify Hadamard products. The system (\ref{e32}), (\ref{e33}) is implemented in its Fourier component form. Thus for $-N/2\leq k\leq N/2-1$, the $k$th discrete Fourier components of $\zeta_{h}$ and $u_{h}$, denoted by $\widehat{\zeta_{h}}(k), \widehat{v_{h}}(k) $, resp., satisfy the fixed point system
\begin{eqnarray}
\underbrace{\begin{pmatrix}-c(1+g(\widetilde{k}))&\frac{1}{\gamma}(1+\frac{\alpha-1}{\alpha}g(\widetilde{k}))\\(1-\gamma)& -c\end{pmatrix}}_{S(\widetilde{k})}
\begin{pmatrix} \widehat{\zeta_{h}}(\widetilde{k})\\\widehat{v_{h}}(\widetilde{k})\end{pmatrix}=\underbrace{\frac{1}{\gamma}\begin{pmatrix} \widehat{\zeta_{h}.u_{h}}(\widetilde{k})\\\widehat{(u_{h}.^{2})/2}(\widetilde{k})\end{pmatrix}}_{\widehat{F(\zeta_{h},u_{h})}_{\widetilde{k}}},\label{e34}
\end{eqnarray}
where
$\widetilde{k}=\pi k/l, -N/2\leq k\leq N/2-1$.

In \cite{BonaDM2021} three methods for the iterative resolution of (\ref{e34}) were proposed: The Petviashvili iteration, the CGN method, and a variant of Petviashvili's method (called e-Petviashvili's method) obtained from solving $\zeta$ in terms of $u$ in the second equation of (\ref{e31}) and substituting into the first one. The resulting equation for $u$ (with quadratic and cubic terms) is iteratively solved with the method proposed in \cite{AlvarezD2014}, an extension of the Petviashvili scheme for nolinearities which are superpositions of homogeneous functions with different degree of homogeneity, cf. \cite{BonaDM2021} for details.
\subsection{Numerical generation of solitary waves with acceleration methods}
In the present paper we propose an alternative technique based on implementing the Petviashvili iteration combined with a vector extrapolation method, \cite{sidi}. The inclusion of the extrapolation has the general benefit of accelerating the convergence of the basic method used for the iteration. (In some cases, the process changes from divergent to convergent.) 

In the present case, the Petviahsvili iteration, formulated as, cf. \cite{BonaDM2021},
\begin{eqnarray}
m_{\nu}&=&\frac{\langle S_{N}Z^{[\nu]},Z^{[\nu]}\rangle_{N}}{\langle F(Z^{[\nu]}),Z^{[\nu]} \rangle_{N}},\nonumber\\
SZ^{[\nu+1]}&=&m_{\nu}^{2}F(Z^{[\nu]}),\; \nu=0,1,\ldots,\label{e35}
\end{eqnarray}
where $Z^{[\nu]}:=(\zeta_{h}^{[\nu]},u_{h}^{\nu]})$, is combined with the so-called minimal polynomial extrapolation method (MPE), which may be described as follows (cf. \cite{smithfs,AlvarezD2015} and references therein for details). From a number of $l$ iterations $Z^{[\nu]},\ldots,Z^{[\nu+l]}$ with the Petviashvili method (\ref{e35}), the extrapolation steps
\begin{eqnarray}
X_{\nu,l}=\sum_{j=0}^{l}\gamma_{j}Z^{[\nu+j]},\label{e34b}
\end{eqnarray}
are computed, where the coefficients $\gamma_{j}$ are of the form
\begin{eqnarray}
\gamma_{j}=\frac{c_{j}}{\displaystyle\sum_{i=0}^{l}c_{i}},\quad 0\leq j\leq l,\label{e34c}
\end{eqnarray}
with $c_{l}=1$ and the $c_{j}, j=0,\ldots,l-1$, are the solution, in the least squares sense, of the system
\begin{eqnarray*}
\sum_{i=0}^{l-1}c_{i}W_{\nu+i}=\widetilde{W}_{\nu},
\end{eqnarray*}
where $W_{j}=\Delta Z^{[j]}:=Z^{[j+1]}-Z^{[j]}, \widetilde{W}_{j}=-\Delta Z^{[j+l]}$. The method (\ref{e34b}), (\ref{e34c}) was originally formulated and analyzed for linear vector sequences in \cite{CabayJ1976}.

The MPE method is typically implemented in cycling mode, \cite{smithfs}. For a fixed width of extrapolation $mw\geq 1$, the advance from the $(\nu+1)$th iteration is performed according to the following steps:
\begin{itemize}
\item Step 1: Compute $mw$ steps of (\ref{e35}) from $X^{[0]}=Z^{[\nu]}$: $Z^{[1]},\ldots,Z^{[mw]}$.
\item Step 2: Compute the corresponding extrapolation steps (\ref{e34b}) from the iterations of Step 1.
\item Set $Z^{[\nu+1]}=X_{mw,\nu}$, $X^{[0]}=Z^{[\nu+1]}$ and go to Step 1.
\end{itemize}

The resulting procedure is controlled by iterating while the residual error
\begin{eqnarray}
RES(\nu)=||S_{N}Z^{[\nu]}-F(Z^{[\nu]})||_{N},\label{e36}
\end{eqnarray}
 is above some tolerance.

\subsection{Numerical experiments}
We illustrate the iterative method described in the previous section with some numerical experiments.
\begin{figure}[htbp]
\centering
\centering
\subfigure[]
{\includegraphics[width=10cm,height=5.2cm]{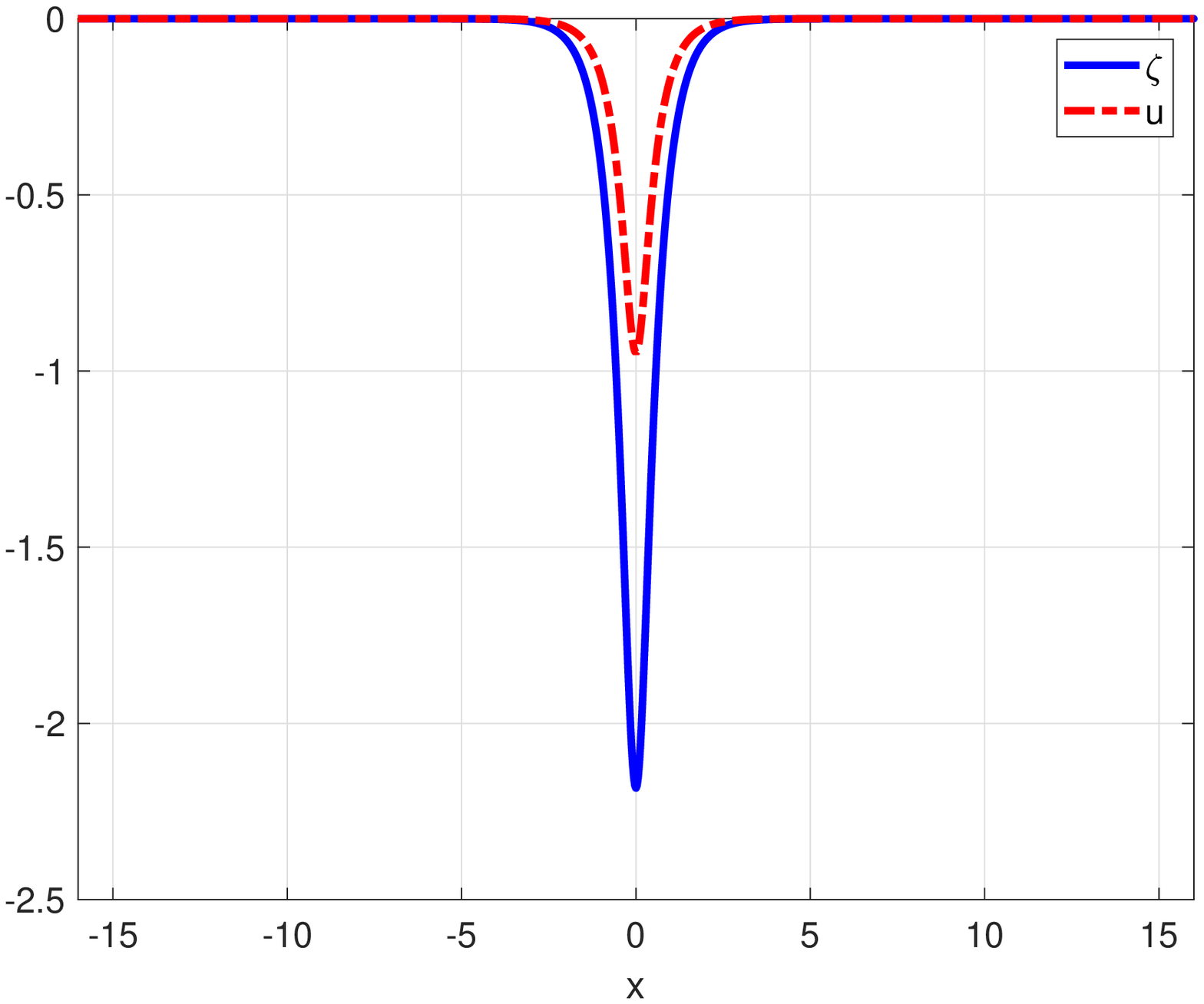}}
\subfigure[]
{\includegraphics[width=10cm,height=5.2cm]{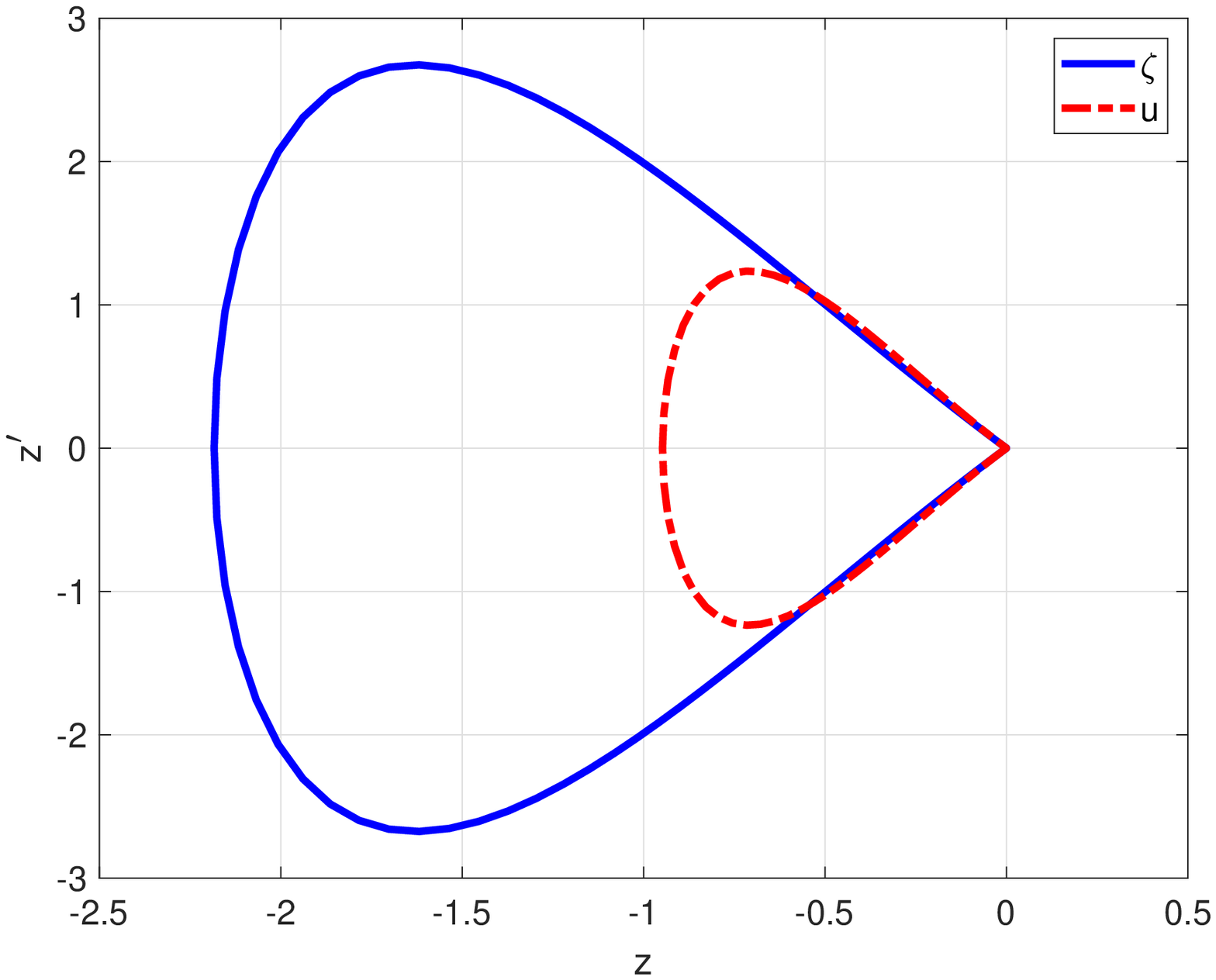}}
\subfigure[]
{\includegraphics[width=10cm,height=5.2cm]{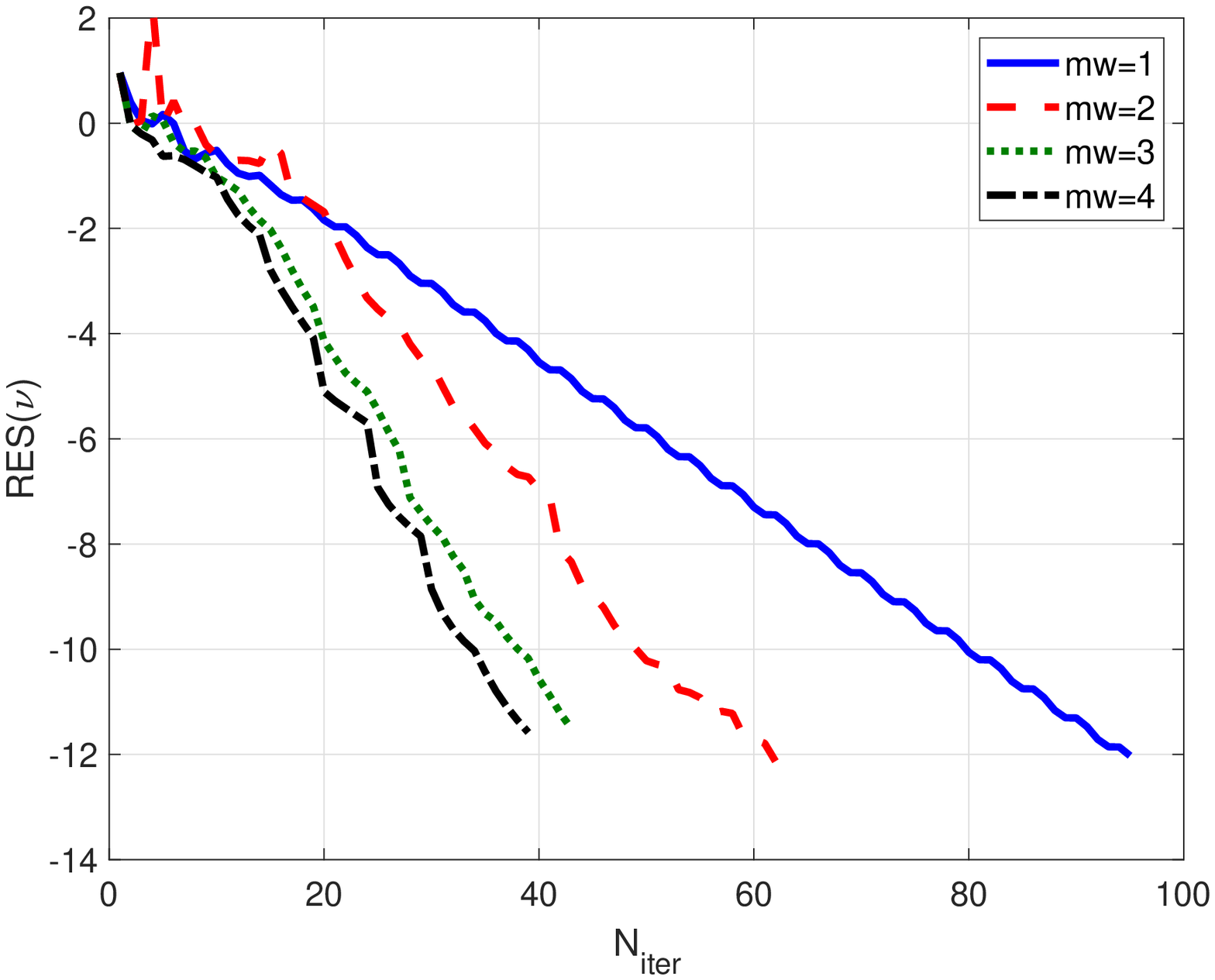}}
\caption{Numerical generation of solitary waves. ILW case with $\gamma=0.8, \alpha=1.2, c_{s}=0.52$. (a) $\zeta$ and $u$ numerical profiles; (b) Phase plot; (c) Residual error (\ref{e36}) vs. number of iterations for several values of the width of extrapolation $mw$.}
\label{ILW1}
\end{figure}
In the case of the ILW system (\ref{ILW}), Figure \ref{ILW1}(a) shows the numerical solitary wave profile obtained with the Petviashvili method (without extrapolation) for $\gamma=0.8, \alpha=1.2$, $c=0.52$ , while the corresponding phase plots are shown in Figure \ref{ILW1}(b). The effect of the extrapolation technique is observed in Figure \ref{ILW1}(c), which shows the behaviour of the residual error (\ref{e36}) as function of the number of iterations, for several values of the width $mw$ ($mw=1$ would correspond to the iteration without extrapolation). Note that the inclusion of the vector extrapolation accelerates the convergence of the iteration by diminishing the number of iterations required for the residual to become smaller than a fixed error.
\begin{figure}[htbp]
\centering
\centering
\subfigure[]
{\includegraphics[width=10cm,height=5.2cm]{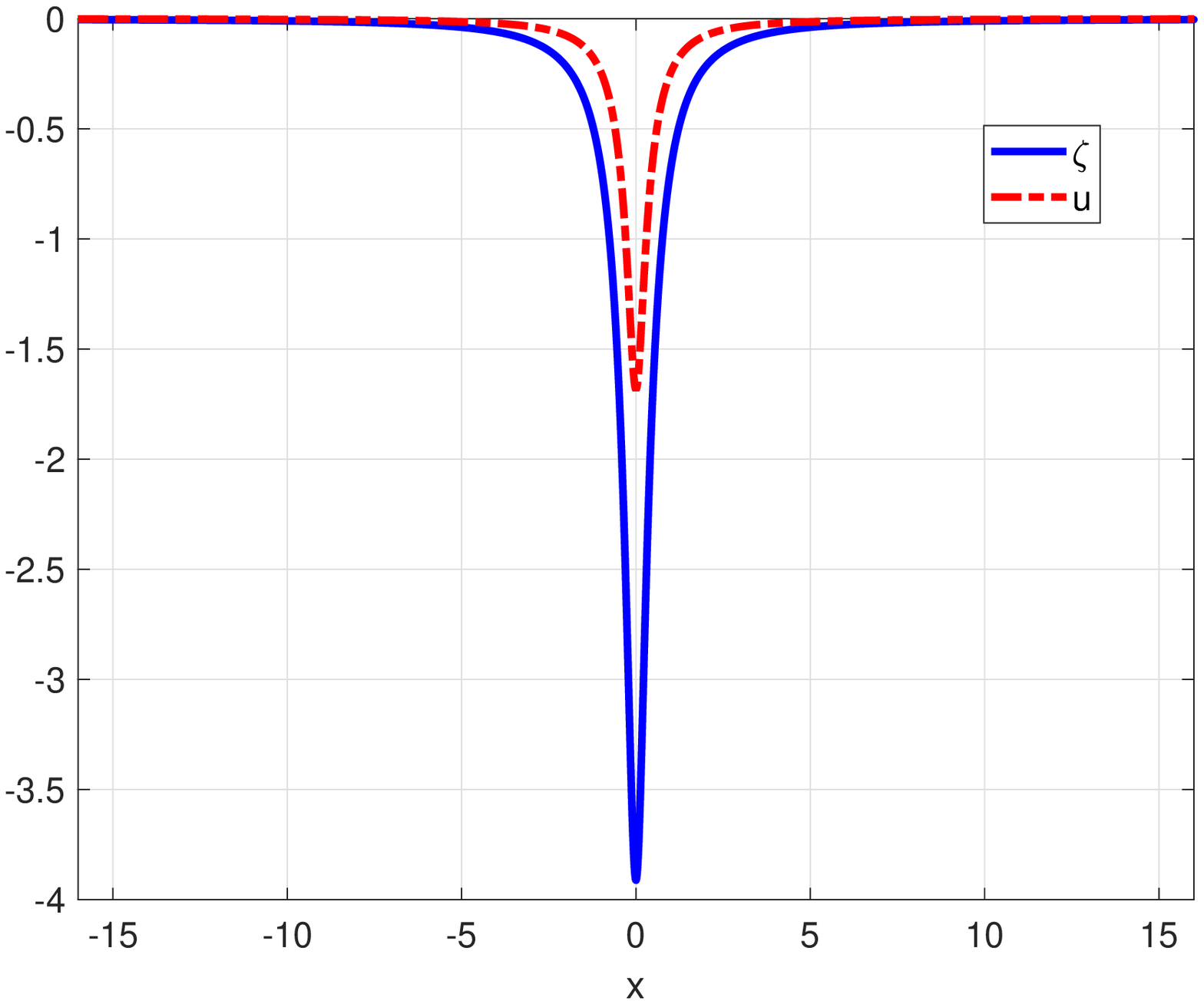}}
\subfigure[]
{\includegraphics[width=10cm,height=5.2cm]{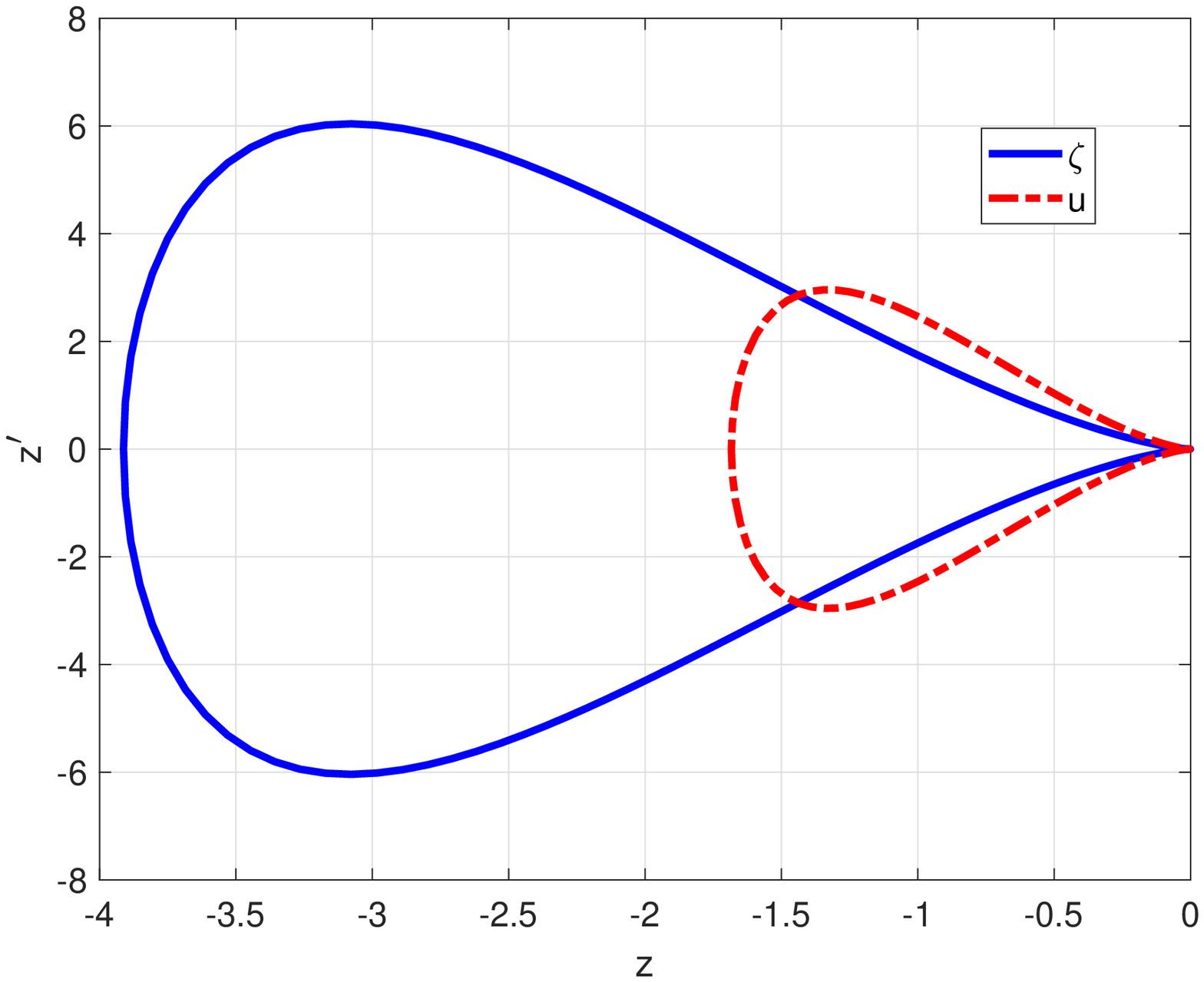}}
\subfigure[]
{\includegraphics[width=10cm,height=5.2cm]{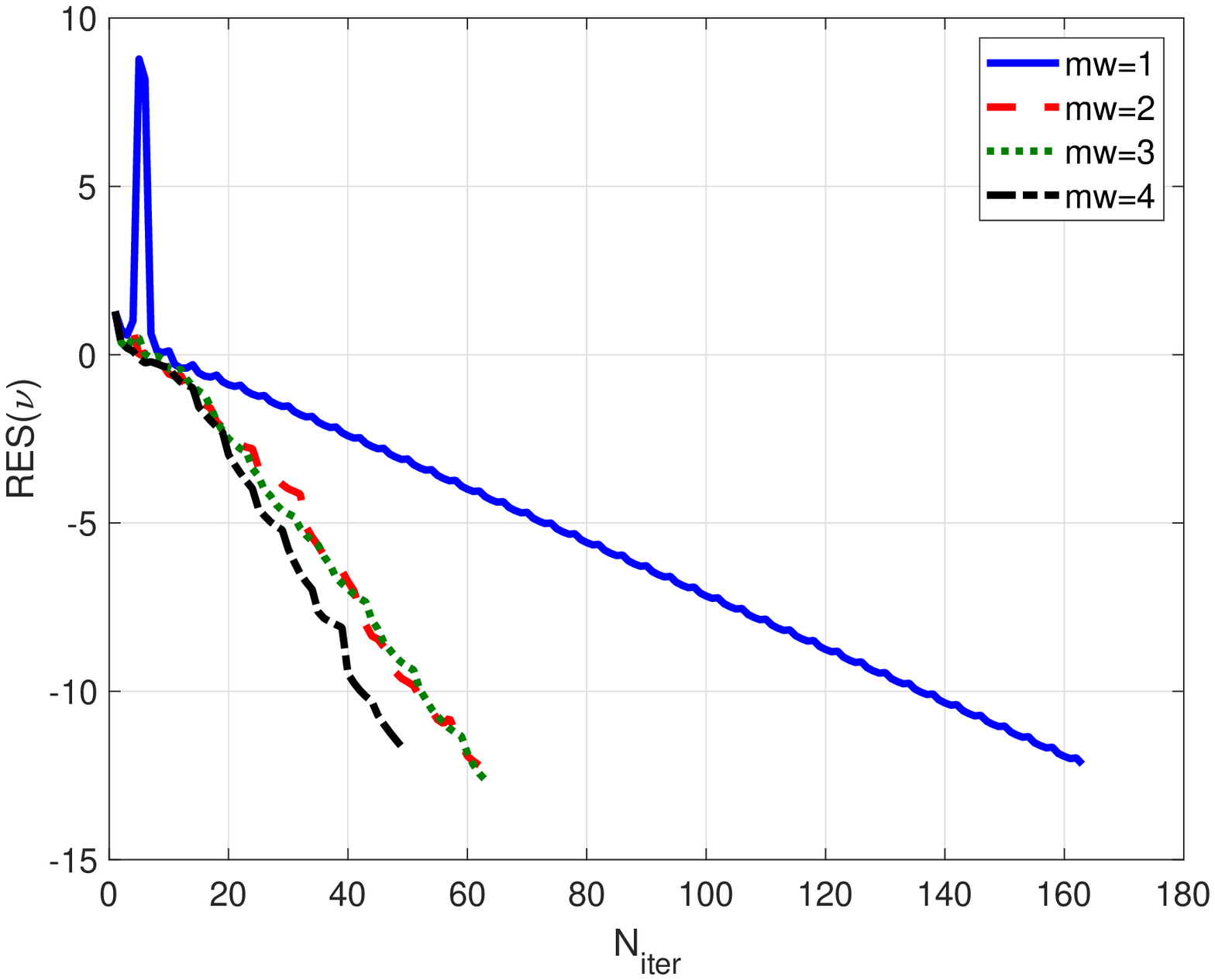}}
\caption{Numerical generation of solitary waves. B-O case with $\gamma=0.8, \alpha=1.2, c_{s}=0.57$. (a) $\zeta$ and $u$ numerical profiles; (b) Phase plot; (c) Residual error (\ref{e36}) vs. number of iterations for several values of the width of extrapolation $mw$.}
\label{BO1}
\end{figure}
The illustration of the B-O case is given in Figure \ref{BO1}, where an approximate solitary wave solution of (\ref{e31}), in the B-O case, for $\gamma=0.8, \alpha=1.2$, and $c=0.57$ is shown in Figure \ref{BO1}(a), and with the corresponding phase plot in Figure \ref{BO1}(b). By comparison with Figure \ref{ILW1}(b), we can observe the different type of decay to zero at infinity (algebraic versus exponential, cf. \cite{AnguloS2019}) of the solitary waves as trajectories homoclinic to the origin. The acceleration of the convergence for the BO case is shown in Figure \ref{BO1}(c). The most remarkable reduction in the number of iterations for a given residual error is observed when we let $mw=1$ (no acceleration) and then $mw=2$ (cycling mode extrapolation with two steps of the Petviashvili method). After that, for larger values of $mw$, the method continues accelerating the convergence, with a milder reduction in the number of iterations (see e.~g. \cite{smithfs,AlvarezD2015} for discussions about an optimal choice for $mw$).
\section*{Acknowledgements}
Vassilios Dougalis and Angel Dur\'an would like to acknowledge travel support, that made possible this collaboration, from the Institute of Mathematics (IMUVA) of the University of Valladolid, and the Institute of Applied and Computational Mathematics of FORTH. Angel Dur\'an was supported by Junta de Castilla y Le\'on and FEDER funds (EU) under Research Grant VA193P20.
Leetha Saridaki was supported by the grant \lq\lq Innovative Actions in Environmental Research and Development (PErAn)\rq\rq (MIS5002358), implemented under the \lq\lq Action for the strategic development of the Research and Technological sector'' funded by the Operational Program \lq\lq Competitiveness, and Innovation'' (NSRF 2014-2020) and co- financed by Greece and the EU (European Regional Development Fund). The grant was issued to the Institute of Applied and Computational Mathematics of FORTH.

\end{document}